\documentclass[12pt, reqno, a4paper]{amsart}

\usepackage{ amssymb, amsmath, enumerate, amsfonts, amsthm, mathrsfs, url, bm, mathtools}

\usepackage{xcolor}  	
\usepackage{hyperref}
\hypersetup{
colorlinks,
   linkcolor={cyan!80!black},
   citecolor={cyan!80!black},
 urlcolor={cyan!80!black}
}

\usepackage{color}

\usepackage[margin=1in]{geometry}

\RequirePackage{doi}

\usepackage{amscd}
\usepackage{amsfonts}
\usepackage{float}
\usepackage{color}
\usepackage[
backend=biber,
style=alphabetic,
]{biblatex}
\usepackage{bookmark}

\renewbibmacro{in:}{}
\DeclareFieldFormat{title}{#1}

\DeclareFieldFormat[article]{title}{\mkbibemph{#1}}       
\DeclareFieldFormat[incollection]{title}{\mkbibemph{#1}}  
\DeclareFieldFormat[book]{title}{\mkbibemph{#1}}          
\DeclareFieldFormat[incollection]{booktitle}{#1}          
\DeclareFieldFormat[article]{journaltitle}{#1}            

\AtEveryBibitem{%
  \ifentrytype{misc}{\DeclareFieldFormat{title}{\mkbibemph{#1}}}{}}

\DeclareFieldFormat{eprint:eprint}{arXiv:\href{https://arxiv.org/abs/#1}{#1}}

\DeclareFieldFormat[inproceedings]{title}{\mkbibemph{#1}}

\DeclareFieldFormat[inproceedings]{booktitle}{#1}

\usepackage{amssymb}

\newtheorem{theorem}{Theorem}[section]
\newtheorem{lemma}{Lemma}[section]

\newtheorem{proposition}{Proposition}[section]

\theoremstyle{definition}

\theoremstyle{remark}
\newtheorem{remark}{Remark}[section]

\numberwithin{equation}{section}

\renewcommand{\leq}{\leqslant}
\renewcommand{\geq}{\geqslant}

\newcommand{\Li}{\operatorname{Li}}

 \newcommand{\EWp}{\mathbb E^{\rm WP}_{M \in \mathcal{M}_g}} 

\begin{document}

\title[A central limit theorem for prime geodesics]{A central limit theorem for prime geodesics on random surfaces of large genus}


\author{Sun-Kai Leung}
\address{Mathematical Institute, University of Oxford, Andrew Wiles Building\\ Radcliffe Observatory Quarter, Woodstock Rd\\
Oxford OX2 6GG\\
United Kingdom}
\email{sunkaileung@gmail.com}


\subjclass[2020]{11N05, 51M10, 53C22, 60F05}

\date{}

\dedicatory{}

\keywords{}

\begin{abstract}
We show that, for Weil--Petersson random closed hyperbolic surfaces of large genus, the normalized weighted count of prime geodesics with norm in the interval $(X,X+H]$ is asymptotically Gaussian, provided $H \leq X$ and $H/\log X\to\infty$ as $X\to\infty$. In particular, it applies to intervals which are not necessarily short.
\end{abstract}

\maketitle

\section{Introduction}

The distribution of primes has fascinated mathematicians for centuries. In 1896, Hadamard and de la Vallée Poussin independently proved the \textit{Prime Number Theorem}. Let $\pi(x):=\#\{p\leq x\}$ denote the prime counting function. The theorem states that 
\begin{align*}
\pi(x) \sim \Li(x):=\int_2^x \frac{dt}{\log t}  \qquad \text{as } x\to\infty,
\end{align*}
or equivalently, the weighted count of prime powers satisfies
\begin{align*}
\psi(x):=\sum_{n\leq x}\Lambda(n)\sim x  \qquad \text{as } x\to\infty,
\end{align*}
where $\Lambda$ is the von Mangoldt function, defined by
\begin{align*}
\Lambda(n):=
\begin{cases}
\log p & \mbox{\rm if $n=p^k$ for some $k\in\mathbb{N}$,} \\
\hfil 0 & \mbox{\rm otherwise.}
\end{cases}
\end{align*}

The probabilistic interpretation of the distribution of primes, particularly  in short intervals, was initiated by Gauss in his famous letter to Encke, and much later developed by Cramér \cite{Cramér1936} (see also \cite{MR1349149}). Given $1 \leq h \leq x \leq X,$ let $\pi(x;h):=\pi(x+h)-\pi(x)$ and $\psi(x;h):=\psi(x+h)-\psi(x),$ which can be viewed as random variables when $x$ is chosen uniformly at random from $[X,2X].$ By the prime number theorem, the mean of $\psi(x;h)$ satisfies
\begin{align*}
\frac{1}{X}\int_X^{2X}\psi(x;h)\,dx \sim h  \qquad \text{as } X\to\infty,
\end{align*}
provided that $h=o(X).$ Assuming the Riemann hypothesis and the strong pair correlation conjecture \cite{MR0337821}, Goldston and Montgomery \cite{MR1018376} showed that the ``variance" of $\psi(x;h)$ satisfies 
\begin{align*}
\frac{1}{X}\int_X^{2X}(\psi(x;h)-h)^2\,dx \sim h\log \frac{X}{h} \qquad \text{as } X\to\infty,
\end{align*}
provided that $X^{\epsilon} \leq h \leq X^{1-\epsilon}$ for any fixed $\epsilon>0.$ 

Assuming a uniform variant of the Hardy--Littlewood prime $k$-tuple conjecture \cite{MR1555183}, Gallagher \cite{MR409385} showed convergence in distribution to a Poisson random variable with parameter $\lambda$ 
\begin{align*}
\pi(x;h) \xrightarrow{\;\;d\;\;} \operatorname{Poisson}(\lambda) \qquad \text{as } X\to\infty,
\end{align*} 
provided that $h=\lambda \log x$ for some fixed $\lambda>0$. Recently, the author \cite{MR4865836} extended the result to a joint Poisson limit theorem for primes in disjoint intervals of length $\asymp \log X$.

Assuming another uniform variant of the Hardy--Littlewood prime $k$-tuple conjecture with power-saving error terms, Montgomery and Soundararajan \cite{MR2104891} showed convergence in distribution to a standard Gaussian 
\begin{align} \label{eq:montsound}
\frac{\psi(x;h)-h}{\sqrt{h\log (X/h)}}
\xrightarrow{\;\;d\;\;} \mathcal{N}(0,1) \qquad \text{as } X\to\infty,
\end{align}
provided that $h/\log X\to\infty$ but $\log h=o(\log X)$. See also \cite{MR5029894} in the logarithmic scale for the range $h=\delta x$, where $\delta>0$ is sufficiently small.

In this paper, we shall establish a central limit theorem for closed geodesics on random surfaces of large genus, analogous to that of Montgomery and Soundararajan for primes.

Let \(M\) be a closed hyperbolic surface of genus \(g\geq 2\). We define
the \emph{norm} of a closed geodesic \(\gamma\) on \(M\) by $N(\gamma):=\exp(\ell(\gamma)),$ where \(\ell(\gamma)\) denotes the length of \(\gamma\).
Let 
\begin{align*}
\Pi_M(X):=\# \{ \gamma \subseteq M \,:\, \gamma \text{ primitive, closed, oriented, and } N(\gamma) \leq X \}
\end{align*}
denote the prime geodesic counting function. Also, we define the weighted count of closed, oriented geodesics $\gamma$ on $M$ with norm $N(\gamma)\leq X$ by
\begin{align*}
\Psi_M(X):=\sum_{\substack{\gamma \subseteq M\\N(\gamma)\leq X}}\Lambda(\gamma),
\end{align*}
where the \textit{von Mangoldt weight} $\Lambda$ is defined by
$
\Lambda(\gamma)
:=
\log N(\gamma_0)
=
\ell(\gamma_0)
$
whenever \(\gamma=\gamma_0^k\), with \(\gamma_0\) primitive and
\(k \in \mathbb{N}\). The \textit{Prime Geodesic Theorem} \cite{MR109212} states that 
\begin{align*}
\Pi_M(X) \sim \operatorname{Li}(X) \qquad \text{as } X\to\infty,
\end{align*}
or equivalently, the weighted count of closed geodesics satisfies
\begin{align*}
\Psi_M(X) \sim X \qquad \text{as } X\to\infty.
\end{align*}

Mirzakhani \cite{MR2264808,MR2415399} pioneered the study of the statistics of closed geodesics on random surfaces sampled from the \textit{moduli space} $\mathcal{M}_g$ of large genus $g$ with respect to the \textit{Weil--Petersson probability measure} (see \cite{MR4108090} and \cite{MR4670581} for exposition). 

Following \cite{rudnick2026closedgeodesicsshortintervals}, to count geodesics with norm in an interval, we denote
\begin{align*}
\Pi_M(X;H):=\Pi_M(X+H)-\Pi_M(X)
\end{align*}
and
\[
\Psi_M(X;H)
:=
\Psi_M(X+H)-\Psi_M(X)
\]
for \(1\leq H\leq X\), which can be viewed as random variables when $M$ is sampled from the moduli space $\mathcal{M}_g$ of large genus $g$ with respect to the Weil--Petersson probability measure. Rudnick \cite{rudnick2026closedgeodesicsshortintervals} showed that the mean of $\Psi_M(X;H)$ satisfies
\begin{align*}
 \EWp \Psi_M(X;H) \sim H \qquad \text{as $g\to\infty$ followed by $X \to \infty$}.\footnotemark
\end{align*}
\footnotetext{In this paper, we write $\EWp$ instead of $\mathbb{E}^{\rm WP}_g$ as the expectation with respect to the Weil--Petersson probability measure to emphasize that the randomness is in $M$, rather than in $X$ or $H$.} He also showed that the ``variance" of $\Psi_M(X;H)$ satisfies
\begin{align*}
\EWp (\Psi_M(X;H)-H)^2 \sim 2H\log X \qquad \text{as $g\to\infty$ followed by $X \to \infty$}.
\end{align*}

\begin{remark} \label{rmk}
A careful inspection of the proofs of \cite[Proposition 3.1 \& Theorem 4.1]{rudnick2026closedgeodesicsshortintervals} shows that the weaker assumption \(H\leq X\) suffices in place of \(H=o(X)\).
\end{remark}

Remarkably, Mirzakhani and Petri \cite{MR4046008} established a joint Poisson limit theorem for closed, unoriented geodesics on random surfaces of large genus with norm in disjoint intervals of length $\asymp \log X$. In particular, by taking orientation into account, we have convergence in distribution to twice a Poisson random variable with parameter $\lambda_{\kappa}$ 
\begin{align*}
\Pi_M(X;H) \xrightarrow{\;\;d\;\;} 2 \cdot \operatorname{Poisson}(\lambda_{\kappa}) \qquad \text{as } g\to\infty,
\end{align*} 
provided that $H=\kappa \log X$ for some fixed $\kappa>0,$ where
\begin{align*}
\lambda_{\kappa}=\lambda_{\kappa}(X)&=\frac{1}{2} \int_{\log X}^{\log (X+H)} \left(e^{\frac{x}{2}}-e^{-\frac{x}{2}}\right)^2\frac{dx}{x} \\
&=\left( 1+O\left(\frac{1}{\log X} \right) \right)\frac{\kappa}{2}.
\end{align*}
Letting $X \to \infty,$ we obtain
\begin{align*}
\Pi_M(X;H) \xrightarrow{\;\;d\;\;} 2 \cdot \operatorname{Poisson}(\kappa/2) 
\qquad \text{as $g\to\infty$ followed by $X \to \infty$},
\end{align*} 
provided that $H=\kappa \log X$ for some fixed $\kappa>0.$

As the parameter increases, the Poisson random variable evolves into a Gaussian. More precisely, we have convergence in distribution to a standard Gaussian
\begin{align*}
\frac{\operatorname{Poisson}(\lambda)-\lambda}{\sqrt{\lambda}} 
\xrightarrow{\;\;d\;\;} \mathcal{N}(0,1) \qquad \text{as $ \lambda\to\infty.$}
\end{align*}
It is therefore natural to expect 
\begin{align*}
\frac{\Pi_M(X;H)-(H/\log X)}{\sqrt{2H/\log X}} 
\xrightarrow{\;\;d\;\;} \mathcal{N}(0,1)
\qquad \text{as $g\to\infty$ followed by $X \to \infty$},
\end{align*}
provided that $H/\log X\to\infty.$ Correspondingly, for the weighted counting function, one expects
\begin{align*}
\frac{ \Psi_{M}(X;H)-H}{\sqrt{2H\log X}}
\xrightarrow{\;\;d\;\;} \mathcal{N}(0,1) \qquad \text{as $g\to\infty$ followed by $X \to \infty$}
\end{align*}
under the same condition.\footnote{Rudnick \cite{rudnick2026closedgeodesicsshortintervals} gave a heuristic explanation of the additional factor $2$ in the variance in terms of the expected GOE statistics for the Laplace spectrum of generic closed hyperbolic surfaces, in contrast to the expected GUE statistics for the nontrivial zeros of the Riemann zeta function, as suggested by Montgomery's pair correlation conjecture. Here, we attribute this additional factor of $2$ to the two possible orientations of a closed geodesic, whereas primes have no such distinction.} In this paper, we confirm this prediction as our main result.

\begin{theorem} \label{thm:main}
Let $1 \leq H \leq X$ satisfy 
$H/\log X \to \infty$ as $X \to \infty.$ Suppose for each $g \geq 2,$ the random surface $M$ is sampled from $\mathcal{M}_g$ with respect to the Weil--Petersson probability measure. Then as $g \to \infty$ followed by $X \to \infty,$ we have convergence in distribution to a standard Gaussian
\begin{align*}
\frac{ \Psi_{M}(X;H)-H}{\sqrt{2H\log X}}
\xrightarrow{\;\;d\;\;} \mathcal{N}(0,1),
\end{align*}
i.e., for any bounded continuous function $F:\mathbb{R}\to\mathbb{C}$, we have
\begin{align*}
\lim_{X\to\infty}\lim_{g\to\infty}
\EWp
F\left(\frac{\Psi_M(X;H)-H}{\sqrt{2H\log X}}\right)
=
\frac{1}{\sqrt{2\pi}}\int_{\mathbb{R}}F(t)e^{-\frac{1}{2}t^2}dt.
\end{align*}
\end{theorem}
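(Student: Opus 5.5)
We will prove Theorem \ref{thm:main} by the method of moments, using the fact that the limit $g\to\infty$ is described by Mirzakhani--Petri \cite{MR4046008}. For fixed $X$ and $H$, as $g\to\infty$ the multiplicities of closed unoriented primitive geodesics with lengths in any fixed compact window become independent Poisson random variables. Concretely, for any finite collection of disjoint length intervals in $[0,L]$, the counts converge jointly to independent Poissons with intensity $\frac{(e^{x/2}-e^{-x/2})^2}{2x}\,dx$. Each such geodesic contributes twice to $\Psi_M$, once for each orientation.

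The statistic $\Psi_M(X;H)$ depends only on geodesics of length at most $\log(X+H)\leq \log 2X$. It is a sum over primitive $\gamma_0$ and $k\geq1$ with $e^{k\ell(\gamma_0)}\in(X,X+H]$ of the weight $\ell(\gamma_0)$. We would therefore first show that, as $g\to\infty$,
\begin{align*}
\Psi_M(X;H)\xrightarrow{\;d\;}\mathcal{P}_X:=\sum_{k\geq1}\int_{\frac1k\log X}^{\frac1k\log(X+H)} 2x\,dN(x),
\end{align*}
where $N$ is the Poisson point process of the stated intensity on $(0,\log 2X]$. The process has finitely many points a.s., and we need convergence of all moments, not just in distribution. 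For this we invoke the moment convergence underlying \cite{MR4046008}, namely Mirzakhani's integration formula together with the large genus asymptotics of Weil--Petersson volumes. This is exactly the input Rudnick used for the first two moments in \cite{rudnick2026closedgeodesicsshortintervals}, and Remark \ref{rmk} shows that $H\leq X$ is allowed. Systoles are short only with bounded probability mass, so for fixed $X$ uniform integrability of $\Psi_M(X;H)^m$ follows from the $(m+1)$-st moment bound supplied by the same formula. Since $F$ is bounded and continuous, the inner limit then equals $\mathbb E\,F\big((\mathcal{P}_X-H)/\sqrt{2H\log X}\big)$.

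It then remains to prove a CLT for the explicit compound Poisson variable $\mathcal{P}_X$ as $X\to\infty$. This is a pure computation with cumulants. The cumulants of a Poisson integral $\int f\,dN$ are $\kappa_m=\int f^m\,d\mu$. For $k=1$ the window is $x\in(\log X,\log(X+H)]$, where $f=2x$ and the intensity is $\approx e^x/(2x)$. This gives
\begin{align*}
\kappa_1\approx\int e^x\,dx=H, \qquad \kappa_2\approx\int 2x\,e^x\,dx\sim 2H\log X, \qquad \kappa_m\approx 2^{m-1}(\log X)^{m-1}H.
\end{align*}
The $k\geq2$ terms come from windows near $\frac1k\log X$, where the intensity is $\approx X^{1/k}$ and the window length is $\lesssim H/(kX)$. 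Their contribution to $\kappa_m$ is $O\big((\log X)^{m}\sum_{k\geq2}X^{1/k-1}H\big)=O\big(H X^{-1/2}(\log X)^{m+1}\big)$, which is negligible against $\kappa_2^{m/2}$. We also need to check that the mean is $H+o(\sqrt{H\log X})$: the $k=1$ mean is $H+O(H/\log X)$ and $H/\log X=o(\sqrt{H\log X})$ because $H\leq X$. This remaining check is where care is needed. The normalized cumulants then satisfy
\begin{align*}
\frac{\kappa_m}{\kappa_2^{m/2}}\asymp\left(\frac{\log X}{H}\right)^{m/2-1}\to0 \quad (m\geq3),
\end{align*}
precisely because $H/\log X\to\infty$. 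Hence all normalized moments converge to Gaussian ones, and since the Gaussian is determined by its moments, the outer limit follows.

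The main obstacle is the first step, namely justifying convergence as $g\to\infty$ in a form strong enough to pass $F$ through, for windows up to length $\log 2X$ with $H$ as large as $X$. Only joint distributional convergence on fixed compact length windows is needed for bounded continuous $F$. We would therefore cite the joint Poisson theorem of \cite{MR4046008} applied to the finitely many windows $\big(\frac1k\log X,\frac1k\log(X+H)\big]$, $k\leq \log(2X)/\mathrm{sys}$. Any $k$ with $\frac1k\log X$ below a fixed $\epsilon>0$ is handled by noting that the Weil--Petersson probability of having a geodesic shorter than $\epsilon$ tends to $O(\epsilon^2)$ as $g\to\infty$. Letting $\epsilon\to0$ after $g\to\infty$ removes the truncation. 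If the cited theorem covers only unit-scale disjoint intervals, we would instead reprove the joint factorial moment asymptotics via Mirzakhani's integration formula, following \cite{MR4046008}.
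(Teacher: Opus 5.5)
Your overall route is the paper's. Mirzakhani--Petri replaces $\Psi_M(X;H)$ by the Poisson functional $\mathcal{F}_{X;H}(\mathcal{L}_{\mathrm{MP}})$; your $\mathcal{P}_X$ is exactly $\Sigma_{\mathrm{MP}}(X;H)$. The CLT for this compound Poisson variable then follows from $\kappa_m=\int f_{X;H}^m\,d\nu_{\mathrm{MP}}$, and using all cumulants instead of Campbell's formula with a third-order Taylor bound is immaterial.

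The gap is in the centering. This is exactly where the range $H\le X$, the point of the theorem, is decided: you need $\kappa_1-H=o(\sqrt{H\log X})$, and neither of your estimates gives it. First, the assertion that $H/\log X=o(\sqrt{H\log X})$ ``because $H\le X$'' is false. It is equivalent to $H=o(\log^3X)$, and for $H=X$ the ratio is $\sqrt X/(\log X)^{3/2}\to\infty$. Second, your bound $O\big(HX^{-1/2}(\log X)^{m+1}\big)$ for the windows $k\ge2$ is fine for $m\ge2$. At $m=1$, however, its ratio to $\sqrt{H\log X}$ is $\sqrt{H/X}\,(\log X)^{3/2}$, which is unbounded once $H\gg X/\log^3X$. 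So as written, your argument does not exclude a centering error larger than the standard deviation.

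What you need is the sharp estimate of Lemma~\ref{lem:1}, $\kappa_1=H+O(HX^{-1/2})$ with no logarithmic loss. It is elementary, since $2x\,d\nu_{\mathrm{MP}}=(e^x-2+e^{-x})\,dx$:
\begin{itemize}
\item the $k=1$ window gives exactly $H-2\log(1+H/X)+X^{-1}-(X+H)^{-1}=H+O(H/X)$;
\item the $k=2$ window gives at most $\sqrt{X+H}-\sqrt X\le H/(2\sqrt X)$;
\item the windows $k\ge3$ give $O(HX^{-2/3}\log X)$, provided you use $e^x-2+e^{-x}\le x^2e^x$ for $k>\log(2X)$ (the crude bound $e^x$ leads to a divergent harmonic sum).
\end{itemize}
Then $HX^{-1/2}/\sqrt{H\log X}=\sqrt{H/(X\log X)}\le(\log X)^{-1/2}\to0$. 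The margin is only $\sqrt{\log X}$ when $H\asymp X$, so no logarithm can be wasted in this step.

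Two smaller points on the $g\to\infty$ step. The moment-convergence and uniform-integrability detour is unnecessary for bounded $F$. Also, joint Poisson limits for counts in finitely many fixed windows do not by themselves control $\sum 2\ell$, because the weight $2\ell$ varies continuously. Either refine to a shrinking mesh, or, as the paper does, use the point-process convergence $\mathcal{L}_g\to\mathcal{L}_{\mathrm{MP}}$ with the continuous mapping theorem, noting that the discontinuity set of $f_{X;H}$ is countable and hence $\nu_{\mathrm{MP}}$-null.
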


\begin{remark}
Rudnick and Wigman \cite{MR4682953} proved a central limit theorem for the Laplace spectrum of random surfaces of large genus, rather than for the length spectrum considered here. In fact, we shall adapt their argument to prove Theorem \ref{thm:main}.
\end{remark}

\begin{remark}
Adapting the argument for Theorem \ref{thm:main} presented in this paper with  \cite[Theorem 4.1]{MR4046008} replaced by \cite[Corollary 2.1]{MR4865836}, one can show that assuming a uniform variant of the prime $k$-tuple conjecture, if $x$ is chosen uniformly at random from $[X,2X]$ for some large $X$, then
\begin{align*}
\frac{\psi(x;h)-h}{\sqrt{h\log (X/h)}}
\xrightarrow{\;\;d\;\;} \mathcal{N}(0,1)
\qquad \text{as $X\to\infty$ followed by $\lambda\to\infty$},
\end{align*}
where $h=\lambda\log x$. This recovers (\ref{eq:montsound}) in a narrow range.
\end{remark}



\noindent\textit{Notation.} 
Throughout the paper, we use the standard big $O,$ little $o$ notations, the asymptotic notation $\sim,$ the Vinogradov notation $\ll,$ and the Hardy notation $\asymp,$ where the implied constants depend only on the subscripted parameters. We denote by $\mathbb{P}_{\ast \in \mathcal{M}_g}^{\rm WP}$ the Weil--Petersson probability measure on the moduli space $\mathcal{M}_g,$ and by $\mathbb{E}_{\ast \in \mathcal{M}_g}^{\rm WP}$ the corresponding expectation.

\section{Poisson Approximation for the Length Spectrum}

In this section, we approximate \(\Psi_M(X;H)\) by a functional of the
\textit{Mirzakhani--Petri point process}.

Following \cite{rudnick2026closedgeodesicsshortintervals}, given $1 \leq H \leq X,$ let $A:=\log X, B:=\log(X+H),$ and
\begin{align*}
f_{X;H}(\ell):=
\begin{cases}
2\ell \left( \left\lfloor \frac{B}{\ell} \right\rfloor
-  \left\lfloor \frac{A}{\ell} \right\rfloor
\right) & \mbox{\rm if $\ell>0$} \\
\hfil 0 & \mbox{\rm if $\ell=0.$}
\end{cases}
\end{align*}
Also, let \(\mathcal{N}\) denote the space of locally finite point
configurations on \([0,\infty)\), identified with their associated
counting measures and endowed with the vague topology. We define
the functional \(\mathcal{F}_{X;H}:\mathcal{N}\to\mathbb{R}\) by
\[
\mathcal{F}_{X;H}(\mathcal{L})
:=
\sum_{\ell\in\mathcal{L}} f_{X;H}(\ell) \qquad \text{for } \mathcal{L}\in\mathcal{N}.
\]

Given $g \geq 2,$ define the length-spectrum map $\mathcal{L}_g: \mathcal{M}_g \rightarrow \mathcal{N}$ by
\begin{align*}
\mathcal{L}_g(M):=\{ \ell(\gamma) \,:\, \gamma \subseteq M  \text{ primitive, unoriented}\} \qquad \text{for } M \in \mathcal{M}_g,
\end{align*}
where the lengths are counted with multiplicity. Then by definition, we have
\begin{align*}
\Psi_M(X)&= 2 \sum_{\substack{\gamma \subseteq M \\\text{primitive} \\ \text{unoriented}}} \sum_{\substack{k \geq 1\\ k \ell(\gamma) \leq \log X}} \ell(\gamma) \\
&=2 \sum_{\substack{\gamma \subseteq M \\\text{primitive} \\ \text{unoriented}}}  
\ell(\gamma) \left\lfloor \frac{\log X}{\ell(\gamma)} \right\rfloor \\
&= \sum_{\ell \in \mathcal{L}_g(M)} 2\ell 
\left\lfloor \frac{A}{\ell} \right\rfloor.
\end{align*}
Similarly, we have
\begin{align*}
\Psi_M(X+H)=  \sum_{\ell \in \mathcal{L}_g(M)} 2\ell 
\left\lfloor \frac{B}{\ell} \right\rfloor,
\end{align*}
and therefore
\begin{align} \label{eq:psi=F}
\Psi_M(X;H)=\mathcal{F}_{X;H}(\mathcal{L}_g(M)).
\end{align}

To approximate \(\Psi_M(X;H)\), let ${\rm Poisson} (\nu_{\rm MP})$ denote the Poisson point process on $[0,\infty)$ with intensity
\begin{align*}
\nu_{\rm MP}(dx):=\frac{1}{2}\left(e^{\frac{x}{2}}-e^{-\frac{x}{2}}\right)^2\frac{dx}{x}.
\end{align*}
For convenience, we write $\mathcal{L}_{\rm MP}:={\rm Poisson} (\nu_{\rm MP}),$ and refer to it as the Mirzakhani--Petri point process.

\begin{proposition} \label{prop:compare}
Given $1 \leq H \leq X,$ let $\Sigma_{\rm MP}(X;H):=\mathcal{F}_{X;H}(\mathcal{L}_{\rm MP}).$ Suppose for each $g \geq 2,$ the random surface $M$ is sampled from $\mathcal{M}_g$ with respect to the Weil--Petersson probability measure. Then as $g \to \infty,$ we have convergence in distribution
\begin{align*}
\Psi_{M}(X;H) \xrightarrow{\;\;d\;\;}  \Sigma_{\rm MP}(X;H),
\end{align*}
i.e., for any bounded continuous function $F:\mathbb{R} \to \mathbb{C},$ we have
\begin{align*}
\lim_{g \to \infty} \EWp F(\Psi_{M}(X;H)) = 
\mathbb{E} F(\Sigma_{\rm MP}(X;H)).
\end{align*}

\begin{remark}
In fact, the proposition extends from \(f_{X;H}\) to any bounded Borel
function with bounded support whose set of discontinuities has
\(\nu_{\mathrm{MP}}\)-measure zero.
\end{remark}

\begin{proof}
In the language of point processes, Mirzakhani and Petri \cite[Theorem 4.1]{MR4046008} established the convergence in distribution
\begin{align*}
\mathcal{L}_g \xrightarrow{\;\;d\;\;} \mathcal{L}_{\rm MP}
\end{align*}
as $g \to \infty$ (see also \cite[Section 3.2]{MR4682953}). Note that the function $f_{X;H}$ is bounded and compactly supported. Also, its set of discontinuities is contained in 
\begin{align*}
D_{X;H}:=\{0\} \cup \left\{  \frac{A}{n} \,:\, n \geq 1 \right\} \cup \left\{  \frac{B}{n} \,:\, n \geq 1 \right\},
\end{align*}
which is countable. Since $\nu_{\rm MP}$ is absolutely continuous, we have
$\nu_{\rm MP}(D_{X;H})=0,$ and therefore $\mathcal{L}_{\rm MP}(D_{X;H})=0$ almost surely. In particular, the functional $\mathcal{F}_{X;H}$ is continuous at every point configuration avoiding $D_{X;H}.$ Arguing analogously to \cite[Lemma 2.1]{MR4682953}, we conclude that $\mathcal{F}_{X;H}$ is continuous at $\mathcal{L}_{\rm MP}$ almost surely. Therefore, the continuous mapping theorem gives the convergence in distribution 
\begin{align*}
\mathcal{F}_{X;H}(\mathcal{L}_g) \xrightarrow{\;\;d\;\;} \mathcal{F}_{X;H}(\mathcal{L}_{\rm MP})
\end{align*}
as $g \to \infty,$ and the proposition follows from (\ref{eq:psi=F}).
\end{proof}

\end{proposition}

\section{Moments of $f_{X;H}$}

In this section, we estimate the first three moments of $f_{X;H}$ with respect to $\nu_{\rm MP},$ with the bulk of the work already carried out in \cite{rudnick2026closedgeodesicsshortintervals}.

\begin{lemma} \label{lem:1}
Let $1 \leq H \leq X.$ Then
\begin{align*}
\int_{0}^{\infty} f_{X;H}(x) d\nu_{\rm MP} (x) =H+O(HX^{-1/2}).
\end{align*}

\begin{proof}
See \cite[pp. 10-11]{rudnick2026closedgeodesicsshortintervals} for the estimation, and also Remark \ref{rmk}.
\end{proof}

\end{lemma}

\begin{lemma}  \label{lem:2}
Let $1 \leq H \leq X.$ Then
\begin{align*}
\int_{0}^{\infty} f^2_{X;H}(x) d\nu_{\rm MP} (x)=
2H\log X+O \left(
\frac{H^2}{X}+\frac{H\log^2 X}{\sqrt{X}}  
\right).
\end{align*}

\begin{proof}
Following \cite[pp. 14-16]{rudnick2026closedgeodesicsshortintervals}, we have
\begin{gather} 
\int_{0}^{\infty} f^2_{X;H}(x) d\nu_{\rm MP} (x)
= \frac{1}{2}\int_{A}^{B} (2x)^2\left( e^{x/2}-e^{-x/2} \right)^2 \frac{d x}{x}  
\nonumber \\
+ 
O \left( \frac{H\log^2 X}{\sqrt{X}}  + \frac{H^2\log^2 X}{X^2} +\frac{H}{X} \right),
\label{eq:b4int}
\end{gather}
where $A=\log X$ and $B=\log(X+H)$ (see Remark \ref{rmk}). Since
\begin{align*}
\int x^2\left( e^{x/2}-e^{-x/2} \right)^2 \frac{d x}{x} =&
 \int x \left( e^{x}-2+e^{-x} \right)^2  d x \\
=& (x-1)e^x-x^2-(x+1)e^{-x}+C
\end{align*}
for any constant $C,$ it follows that
\begin{gather*}
\int_{A}^{B} x^2\left( e^{x/2}-e^{-x/2} \right)^2 \frac{d x}{x}  -H\log X  \\
 =(X+H)\delta-H-(2A\delta+\delta^2)+\frac{A+1}{X}-\frac{B+1}{X+H},
\end{gather*}
where $\delta:=\log(1+(H/X)).$ Using the fact $\delta \ll H/X$ for $1 \leq H \leq X,$ this is
\begin{align*}
\ll  \frac{H^2}{X}+\frac{H\log X}{X}.
\end{align*}
Therefore, we obtain
\begin{align*}
 \frac{1}{2}\int_{A}^{B} (2x)^2\left( e^{x/2}-e^{-x/2} \right)^2 \frac{d x}{x}
 =2H\log X+ O \left( \frac{H^2}{X}+\frac{H\log X}{X} \right),
\end{align*}
and the lemma follows from (\ref{eq:b4int}). 
\end{proof}

\end{lemma}

\begin{lemma}  \label{lem:3}
Let $1 \leq H \leq X.$ Then
\begin{align*}
\int_{0}^{\infty} f^3_{X;H}(x) d\nu_{\rm MP} (x)
\ll  H\log^2 X.
\end{align*}

\begin{proof}
By definition, we have $0 \leq f_{X;H}(x) \leq 2B=2\log(X+H)$ for any $x \geq 0.$
Applying Lemma \ref{lem:2}, we obtain
\begin{align*}
\int_{0}^{\infty} f^3_{X;H}(x) d\nu_{\rm MP} (x)
\ll&  \log X \int_{0}^{\infty} f^2_{X;H}(x) d\nu_{\rm MP} (x) \\
\ll& H\log^2 X,
\end{align*}
and therefore the lemma follows.
\end{proof}

\end{lemma}

\section{Proof of Theorem \ref{thm:main}}

We adapt the argument from \cite[Section 3]{MR4682953}. Applying Proposition \ref{prop:compare} with the function $F(x)=\exp(it(x-H)/\sqrt{2H\log X})$ for each $t \in \mathbb{R},$ we obtain
\begin{align*}
\lim_{g \to \infty} \EWp \exp \left( it \cdot \frac{\Psi_{M_g}(X;H)-H}{\sqrt{2H\log X}} \right)
=\mathbb{E} \exp \left( it \cdot \frac{\Sigma_{\rm MP}(X;H)-H}{\sqrt{2H\log X}} \right).
\end{align*}
By Campbell's formula (see \cite[Chapter 3.2]{MR1207584} for instance), we have
\begin{align*}
\mathbb{E} \exp(it \cdot\Sigma_{\rm MP}(X;H))
=
\exp \left( \int_0^{\infty} \left(e^{itf_{X;H}(x)}-1 \right) d\nu_{\rm MP}(x) \right),
\end{align*}
so that
\begin{gather} 
\log \left(\mathbb{E}\exp \left( it \cdot \frac{\Sigma_{\rm MP}(X;H)-H}{\sqrt{2H\log X}} \right) \right) \nonumber \\
=
\int_0^{\infty} \left(\exp\left({it \cdot \frac{f_{X;H}(x)}{\sqrt{2H\log X}}}\right)-1 \right) d\nu_{\rm MP}(x)  -it \cdot \frac{H}{\sqrt{2H\log X}}. \label{eq:aftercampbell}
\end{gather}
Write
\begin{align*}
\mu_{X;H}:=\int_{0}^{\infty} f_{X;H}(x) d\nu_{\rm MP} (x).
\end{align*}
Using the fact
\begin{align*}
e^{iu}=1+iu-\frac{1}{2}u^2+O(|u|^3)
\end{align*}
for any $u \in \mathbb{R},$ the expression (\ref{eq:aftercampbell}) becomes
\begin{gather*}
\frac{it(\mu_{X;H}-H)}{\sqrt{2H\log X}}
-\frac{t^2}{4H\log X} \int_0^{\infty} f_{X;H}^2(x) d\nu_{\rm MP}(x) \\
+O \left( \frac{|t|^3}{(H\log X)^{3/2}} \int_{0}^{\infty}  f_{X;H}^3(x) d\nu_{\rm MP}(x) \right).
\end{gather*}
Applying Lemma \ref{lem:1}, Lemma \ref{lem:2}, and Lemma \ref{lem:3}, this is
\begin{align*}
-\frac{t^2}{2}+O \left( |t| \sqrt{\frac{H}{X\log X}} +
|t|^2 \frac{H}{X\log X}+|t|^2 \frac{\log X}{\sqrt{X}}+
|t|^3 \sqrt{\frac{\log X}{H}}
\right).
\end{align*}
Therefore, for each $t \in \mathbb{R},$ we conclude that 
\begin{align*}
\lim_{X \to \infty}\lim_{g \to \infty} \EWp \exp \left( it \cdot \frac{\Psi_{M}(X;H)-H}{\sqrt{2H\log X}} \right)=\exp\left(-\frac{t^2}{2}\right)
\end{align*}
whenever $1 \leq H \leq X$ and $H/\log X \to \infty$ as $X \to \infty,$ and the theorem then follows by Lévy’s continuity theorem.






\section*{Acknowledgements}
The author would like to thank Mo Dick Wong for reading an early draft.

\clearpage

\printbibliography


\end{document}